\documentclass[12pt]{article}

\usepackage[top=25truemm,bottom=20truemm,left=20truemm,right=20truemm]{geometry}
\usepackage[noadjust]{cite}
\usepackage[dvipsnames]{xcolor}
\usepackage{enumitem}
\usepackage[pdftex]{graphicx}
\usepackage{amsmath,amssymb,amsfonts,amssymb,amscd} 
\usepackage{amsthm} 
\usepackage{thmtools}
\usepackage[all]{xy} 
\usepackage{mathtools}
\usepackage{cleveref} 

\theoremstyle{plain}
\newtheorem{thm}{Theorem}[section]
\newtheorem{prop}[thm]{Proposition}
\newtheorem{cor}[thm]{Corollary}

\newtheorem{prob}[thm]{Problem}

\theoremstyle{definition}
\newtheorem{defn}[thm]{Definition}
\newtheorem{example}[thm]{Example}
\newtheorem{rem}[thm]{Remark}

\crefname{thm}{Theorem}{Theorems}
\crefname{cor}{Corollary}{Corollarys}
\crefname{lem}{Lemma}{Lemmas}
\crefname{prop}{Proposition}{Propositions}
\crefname{defn}{Definition}{Definitions}
\crefname{example}{Example}{Examples}
\crefname{rem}{Remark}{Remarks}
\crefname{figure}{Figure}{Figures}
\crefname{section}{Section}{Section}

\newcommand{\qop}{\triangleleft}
\newcommand{\HH}{\mathbb{H}}
\newcommand{\RR}{\mathbb{R}}

\newcommand{\GAlex}{\mathrm{GAlex}}

\title{Detecting non-admissibility of quandles via colorings}
\author{Katsunori Arai\footnote{arai1223math@gmail.com} and Ryoya Kai \footnote{kai.ryoya.d8@cc.nara-edu.ac.jp}}
\date{\ }

\begin{document}

\maketitle


\begin{abstract}
    A quandle is an algebraic system whose axioms are motivated by Reidemeister moves in knot theory.
    A typical example is a conjugation quandle 
    arising from a group.
    A quandle is said to be admissible if it is isomorphic to a conjugation quandle.
    Admissible quandles often yield knot invariants that coincide with those derived from the knot group, whereas non-admissible quandles may produce genuinely new invariants.
    In this sense,
    it is important to construct non-admissible quandles.
    In this paper, 
    we provide criteria for determining whether given quandles are admissible by colorings of $(1,1)$-tangles. 
    As an application, we construct numerous examples of non-admissible quandles by analyzing simple tangles obtained from the Hopf link and the trefoil knot.
\end{abstract}

\section{Introduction}

Quandles were introduced as algebraic structures for constructing link
invariants; their axioms correspond to Reidemeister moves in knot theory. 
Beyond knot theory, they can be regarded as a generalization of conjugation in groups and are closely related to the algebraic properties of point symmetries of symmetric spaces.
These connections have led to developments and applications in various areas of mathematics. 
In particular, the fundamental quandle of an oriented link is a remarkably strong link invariant. 

However, the algebraic structure of the fundamental quandle of an oriented link is often too complicated to analyze directly. 
Therefore, it is common to study quandle homomorphisms from the fundamental quandle to a fixed, explicitly described quandle. 
Such homomorphisms can be described combinatorially as assignments of elements of the quandle to the arcs of an oriented link diagram satisfying certain conditions. 
These assignments are called colorings of the diagram by the quandle, and are in bijection with homomorphisms from the fundamental quandle to the quandle. 
Moreover, using the functor that associates a group to a quandle, 
quandle colorings can be related to group homomorphisms of the knot group into an appropriate group. 
From this perspective, quandle colorings refine the notion of group homomorphisms from knot groups.

Every group gives rise to a quandle via conjugation, and its subquandles are called conjugation quandles. 
A quandle is said to be admissible if it is isomorphic to a conjugation quandle. 
In this case, quandle homomorphisms from the fundamental quandle correspond to group homomorphisms from the knot group. 
Consequently, to extract genuinely new information beyond invariants derived from the knot group, 
it is essential to consider non-admissible quandles. This motivates the following problem:

\begin{prob}[cf. {\cite[Question 3.1]{Bardakov-2017-AutomorphismGroupsQuandlesArising}}]
  Determine whether a given quandle is admissible or not.
\end{prob}
In general, it is difficult to determine whether a given quandle is admissible.
In fact, to solve this problem, we need to solve the word problem for certain elements in the associated group of the quandle.

In this paper, we introduce the notion of $(L,p)$-admissibility for quandles, where $L$ is an oriented link, and $p$ is a base point on $L$. This notion is described in terms of colorings of a diagram of the associated $(1,1)$-tangle, which is obtained by cutting $L$ at $p$.
As an application, we provide explicit examples of non-admissible quandles by analyzing simple $(1,1)$-tangles obtained from the Hopf link and the trefoil knot.

\section{Preliminaries}
In this section, we review basic definitions and examples of quandles used throughout the paper.

\begin{defn}
  Let $X$ be a nonempty set and $\qop: X \times X \to X$ a binary operation on $X$.
  The pair $(X, \qop)$ is called a \textit{quandle} if it satisfies the following conditions:
  \begin{itemize}
    \item For any $x \in X$, we have $x \qop x = x$.
    \item For any $y \in X$, the map $S_{y}: X \to X$, defined by $S_{y}(x) = x \qop y$, is bijective.
    \item For any $x, y, z \in X$, 
    the identity $(x \qop y) \qop z = (x \qop z) \qop (y \qop z)$ holds.
  \end{itemize}
\end{defn}
For $x, y \in X$, we denote $S_{y}^{-1}(x)$ by $x \overline{\qop} y$.
For simplicity, we often write $X$ for the quandle $(X, \qop)$
when no confusion can occur.

Let $X$ and $Y$ be quandles.
A map $f: X \to Y$ is a \textit{quandle homomorphism}
if it preserves the quandle operations.
A \textit{quandle isomorphism} is a bijective quandle homomorphism.
Two quandles are said to be \textit{isomorphic} 
if there exists a quandle isomorphism from one to the other.

A \textit{subquandle} of a quandle $(X, \qop)$ is a subset $A \subset X$ that is closed under the operations $\qop$ and $\overline{\qop}$.
If $A$ is a subquandle of $X$, then the inclusion map $i: A \hookrightarrow X$ is an injective quandle homomorphism.
More generally, for a quandle homomorphism $f: X \to Y$, the image $f(X)$ is a subquandle of $Y$.
In addition, if the homomorphism $f$ is injective, then the image $f(X)$ is isomorphic to $X$.

\begin{example}\label{Ex:conjugation_quandle}
  A group $G$ becomes a quandle with the operation $g \qop h := h^{-1}gh$ for $g, h \in G$, denoted by $\mathrm{Conj}(G)$.
  A subquandle of $\mathrm{Conj}(G)$ is called a \textit{conjugation quandle}. We note that any subquandle of $\mathrm{Conj}(G)$ consists of some conjugacy classes of a subgroup of the group $G$.
\end{example}

\begin{example}\label{Ex:GAlex}
  Given a group $G$ and an automorphism $\sigma \in \mathrm{Aut}(G)$,
  define the binary operation $g \qop h := \sigma (g h^{-1}) h$ for $g, h \in G$.
  Then $G$ becomes a quandle.
  This quandle is denoted by $\mathrm{GAlex}(G, \sigma)$, and is called a generalized Alexander quandle.
\end{example}

\begin{defn}
  A quandle $X$ is said to be \textit{admissible} 
  if there exists an injective quandle homomorphism $X \to \mathrm{Conj}(G)$ for a group $G$.
  Otherwise, $X$ is said to be \textit{non-admissible}.
  In other words,
  an admissible quandle is isomorphic to a subquandle of $\mathrm{Conj}(G)$ for some group $G$.
\end{defn}

\begin{rem}
  The notion of admissible quandles 
  was introduced in \cite{Kamada-2005-EnvelopingMonoidalQuandles}. 
  An admissible quandle is also referred to as reduced \cite{Ryder-1996-AlgebraicConditionDetermineWhethera,Inoue-2010-KnotQuandlesInfiniteCyclica}, 
  or embeddable \cite{Akita-2023-EmbeddingAlexanderQuandlesGroupsa}.
\end{rem}

\begin{rem}
  It is known that several classes of quandles are admissible,
  including Alexander quandles, core quandles, and twisted conjugation quandles.
  For further details, see the introduction of \cite{Akita-2023-EmbeddingAlexanderQuandlesGroupsa}.
\end{rem}
\section{
  $(L,p)$-admissibility for quandles
}

In this section, 
we introduce a criterion for detecting non-admissibility of quandles.
The key idea is to study colorings of a $(1,1)$-tangle obtained from an oriented link with a base point.

A \textit{$(1,1)$-tangle} is the image of a smooth embedding $f: \mathbb{R} \sqcup S^{1} \sqcup \cdots \sqcup S^{1} \hookrightarrow \mathbb{R}^{3}$ with the following condition:
there exist $a, b \in \mathbb{R}$ such that $f|_{\mathbb{R}}(t) = (0,0,t)$ for any $t \in \mathbb{R} \setminus (a, b)$.
Throughout this paper,
we assume that every $(1,1)$-tangle is oriented.
In addition, the component $f(\mathbb{R})$
is equipped with the orientation directed from $f(a)$ to $f(b)$.
A \textit{diagram} of a $(1,1)$-tangle is defined as usual in knot theory. 
A diagram of an oriented $(1,1)$-tangle is obtained from a diagram $D$ of an oriented link $L$ with a base point $p \in L$ as follows:
we assume that the point of $D$ corresponding to $p$ lies in the interior of an arc of $D$.
By cutting the diagram $D$ at the base point,
we obtain the diagram $\tilde{D}$ of the $(1,1)$-tangle $\tilde{L}$, as illustrated in \cref{Fig:1_1-tangle}.
Note that if two base points $p, q \in L$ are contained in the same component, 
then we obtain equivalent $(1,1)$-tangles; that is, they are ambient isotopic in $\RR$ relative to $\RR^{2} \times (\RR \setminus (a, b)$ for some $a, b \in \RR$.
The $(1,1)$-tangle $\tilde{L}$ is called the \textit{$(1,1)$-tangle obtained from the pair $(L, p)$ of the link $L$ and the base point $p$}.
In what follows, we denote by $a_{s}$ and $a_{t}$ the arcs containing $f(a)$ and $f(b)$, respectively.

\begin{figure}[h]
  \centering
  \includegraphics{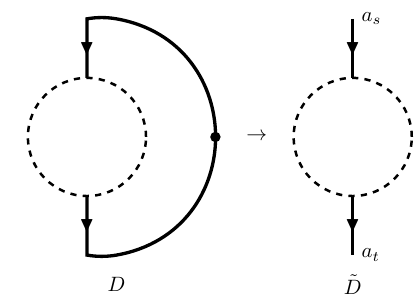}
  \caption{A diagram $\tilde{D}$ of the $(1,1)$-tangle $\tilde{L}$ obtained from $D$}
  \label{Fig:1_1-tangle}
\end{figure}

We now recall the notion of a quandle coloring of a diagram.
\begin{defn}
  Let $D$ be a diagram of an oriented link $L$ or a $(1,1)$-tangle $\tilde{L}$,
  and let $\mathrm{Arc}(D)$ denote the set of all arcs of $D$.
  For a quandle $X$, a map $C: \mathrm{Arc}(D) \to X$ is called an \textit{$X$-coloring} of $D$ 
  if it satisfies the following condition at each crossing:
  the map $C$ satisfies $C(a_{i}) \triangleleft C(a_{j}) = C(a_{k})$, 
  where $a_{i}, a_{j}, a_{k} \in \mathrm{Arc}(D)$ 
  are as shown in \cref{Fig:Coloring_condition}.
  \begin{figure}
    \centering
    \includegraphics{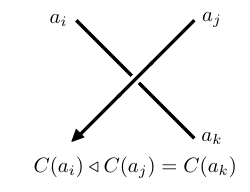}
    \caption{A coloring condition}
    \label{Fig:Coloring_condition}
  \end{figure}
\end{defn}

Using quandle colorings of a diagram of $(1,1)$-tangle, we introduce a notion of admissibility for quandles.

\begin{defn}
  Let $L$ be an oriented link with a base point $p$,
  and let $\tilde{L}$ be the $(1,1)$-tangle obtained from $(L, p)$.
  We say that a quandle $X$ is \textit{$(L,p)$-admissible} 
  if any $X$-coloring $C$ of a diagram $D$ of $\tilde{L}$ satisfies $C(a_{s})=C(a_{t})$.
\end{defn}

\begin{rem}
  The $(L,p)$-admissibility generally depends on the choices of the orientation of $L$ and of the base point $p$.
  In particular, if $K$ is an oriented knot and $p \in K$ is a base point, then the $(1,1)$-tangle $\tilde{K}$ obtained from $(K,p)$ is independent of the choice of base point on $K$.
  A quandle $X$ is said to be \textit{$L$-admissible} if it is $(L, p)$-admissible for every base point $p \in L$.
  Hence, for an oriented knot $K$, we simply say that a quandle is \textit{$K$-admissible}.
\end{rem}

The following proposition shows that $(L,p)$-admissibility is a necessary condition for admissibility.
This can be viewed as a refinement of \cite{Nosaka-2011,Clark-2016}.

\begin{prop}
  Any admissible quandle is $(L,p)$-admissible for any link $L$ with a base point $p \in L$.
  \label{Prop:K-admissible_condition}
\end{prop}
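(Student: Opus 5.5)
Since $X$ is admissible, it embeds injectively into $\mathrm{Conj}(G)$ for some group $G$. Composing an $X$-coloring with this embedding gives a $\mathrm{Conj}(G)$-coloring, and by injectivity it suffices to prove $C(a_{s})=C(a_{t})$ for $\mathrm{Conj}(G)$-colorings. So we reduce to $X=\mathrm{Conj}(G)$: composition with a quandle homomorphism preserves the crossing condition $C(a_i)\qop C(a_j)=C(a_k)$, and an injective map detects equality of colors.

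Now fix a $\mathrm{Conj}(G)$-coloring $C$ of a diagram $\tilde D$ of $\tilde L$. Following the long component from $a_{s}$ to $a_{t}$, we pass under crossings whose over-arcs are $b_1,\dots,b_n$ with signs $\varepsilon_1,\dots,\varepsilon_n\in\{\pm1\}$. At each undercrossing the colour changes by conjugation:
\[
C(a_{t}) = w^{-1} C(a_{s})\, w, \qquad w=C(b_1)^{\varepsilon_1}\cdots C(b_n)^{\varepsilon_n}.
\]
It therefore suffices to show that $w$ commutes with $x:=C(a_{s})$.

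The key step uses the Wirtinger presentation of $\pi_1(\mathbb{R}^3\setminus L)$. The long component closes up to the component of $L$ containing $p$. The word $w$, read in the Wirtinger generators, is a longitude $\ell$ of that component, namely the parallel curve starting near $p$. The meridian $m$ at $p$ corresponds to the arc through $p$. The peripheral torus gives the commuting relation $m\ell=\ell m$ in the link group.

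To transfer this relation to $G$, I would define a homomorphism $\phi$ from the link group of $L$ to $G$ by sending each Wirtinger generator of an arc of $D$ to its colour. Away from $p$, arcs of $D$ correspond to arcs of $\tilde D$. The arc through $p$ is split into $a_{s}$ and $a_{t}$, and I send it to $x=C(a_{s})$.

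The main obstacle is well-definedness: the Wirtinger relations of $D$ involving the arc through $p$ are imposed for $C(a_{s})$ and $C(a_{t})$ separately, not for a single common colour. To avoid circularity, I would instead work in the group of the $(1,1)$-tangle, i.e.\ the fundamental group of the complement of $\tilde L$ in a ball relative to its ends. It has the Wirtinger presentation of $\tilde D$, in which $a_s$ and $a_t$ are distinct generators. The coloring gives a homomorphism $\psi$ from this group to $G$, because every crossing of $\tilde D$ yields a conjugation relation satisfied in $\mathrm{Conj}(G)$. Topologically, the tangle complement is the complement of $L$ cut along a meridional disk at $p$, so the two end generators are equal, $a_s=a_t$ as elements, which holds because the meridians at the two ends of the long strand are isotopic through the outer boundary sphere. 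Then $C(a_t)=\psi(a_t)=\psi(a_s)=C(a_s)$ directly, with no longitude bookkeeping needed.

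This equality of end meridians in the tangle group is the crux. I would justify it by noting that a $(1,1)$-tangle's complement contains the straight parts $t\notin(a,b)$, and a loop around the bottom end slides over the sphere at infinity to a loop around the top end. Equivalently, the closure of the tangle is $L$, and the knot group with the closing arc identifies them. I expect to present the argument via the group of the tangle diagram to make this rigorous.
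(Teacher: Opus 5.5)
Your final argument is correct and is essentially the paper's proof. The Wirtinger group of $\tilde D$ is $\mathrm{As}(Q(\tilde L))$, your $\psi$ is $f_{\ast}\circ\mathrm{As}(C)$ for the embedding $f\colon X\to\mathrm{Conj}(G)$, and using injectivity of $f$ directly in place of injectivity of $\eta_X$ (\cref{Prop:Admissible}) amounts to the same thing. The paper cites Joyce for $a_s=a_t$ in this group, whereas you supply the right topological reason: the two end meridians can be pushed into the annulus formed by a large sphere minus its two points of $\tilde L$, whose abelian $\pi_1$ turns free homotopy into equality of based loops; the longitude detour and the ``closing arc'' remark, which presupposes this very equality, can be dropped.
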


A proof of \cref{Prop:K-admissible_condition} is given in \cref{Sec:Proof}.
As a consequence, we obtain the following criterion:
\begin{cor}\label{Cor:Non-admissible_condition}
  If a quandle is not $(L, p)$-admissible for some link $L$ with a base point $p$,
  then the quandle is non-admissible.
\end{cor}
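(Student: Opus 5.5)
The corollary is the contrapositive of \cref{Prop:K-admissible_condition}, so the plan is mainly to prove that proposition. Let $X$ be admissible. Then there is an injective homomorphism $\iota\colon X\to \mathrm{Conj}(G)$ for some group $G$. Fix a diagram $D$ of $\tilde{L}$ and an $X$-coloring $C$. Because $\iota$ is injective, it suffices to show $\iota(C(a_s))=\iota(C(a_t))$. Since $\iota\circ C$ is a $\mathrm{Conj}(G)$-coloring, we may assume from the start that $X=\mathrm{Conj}(G)$, i.e.\ that $C$ takes values in $G$ with $g\qop h=h^{-1}gh$. The corollary then follows immediately by contraposition.

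The core of the argument is a Wirtinger-type computation. Each arc of $D$ carries an element of $G$. As we traverse the long component of $\tilde{D}$ from $a_s$ to $a_t$, each undercrossing replaces the current color $x$ by $x\qop y=y^{-1}xy$ or by $x\,\overline{\qop}\,y=yxy^{-1}$, where $y$ is the color of the over-arc. Hence $C(a_t)=w^{-1}C(a_s)w$, where $w=y_1^{\epsilon_1}\cdots y_n^{\epsilon_n}$ is the product of the over-arc colors met along the way. It therefore suffices to show that $w$ commutes with $C(a_s)$.

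To do this I will close up $\tilde{D}$ into the original diagram $D_L$ of $L$, by joining $a_s$ to $a_t$ through the base point. On this closure, the coloring assigns $x_0=C(a_s)$ to the arc before the base point and $C(a_t)$ to the arc after it, so it need not be a genuine coloring. The cleanest route is via the fundamental group of the tangle complement. The coloring $C$ induces a homomorphism $\rho\colon \pi_1(\mathbb{R}^3\setminus\tilde{L})\to G$ sending each Wirtinger meridian to its color; this is well defined because the conjugation relations are exactly the Wirtinger relations. In this group the meridian $m_s$ at the bottom end and the meridian $m_t$ at the top end are related by $m_t=\lambda^{-1}m_s\lambda$. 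Here $\lambda$ is the word in the over-arc meridians, which is a longitude of the long component. The bottom and top ends of the tangle are both unknotted straight lines outside $(a,b)$, so a loop around the far top end can be isotoped, far from the tangle, to a loop around the far bottom end. Concretely, a large sphere-like argument, or adding the point at infinity, shows $\pi_1(\mathbb{R}^3\setminus\tilde{L})\cong\pi_1(S^3\setminus L)$, and under this isomorphism both $m_s$ and $m_t$ map to meridians at the base point $p$. More carefully, $m_s$ and $m_t$ are homotopic through loops encircling the vertical axis at heights below $a$ and above $b$, since the complement of a neighborhood of $\tilde{L}$ outside a ball is the complement of a line in $\mathbb{R}^3$ minus a ball, and that space is connected. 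Thus $m_s=m_t$ in $\pi_1$, and applying $\rho$ gives $C(a_s)=C(a_t)$.

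The main obstacle is making the equality $m_s=m_t$ rigorous with consistent basepoints, so that the Wirtinger presentation of the tangle matches this geometric fact. I will handle it by taking the base point of $\pi_1$ far above the diagram plane, in the standard Wirtinger convention. A meridian loop around $\{(0,0,t)\}$ can then be slid vertically from $t<a$ to $t>b$ along a large cylinder. This cylinder meets the tangle only in the straight axis, because the tangle lies in a bounded region away from the axis tails. The slid loop stays in the complement and remains based at the base point via the overhead path.
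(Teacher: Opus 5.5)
Your argument is correct. Its skeleton matches the paper's: push the coloring forward to a group homomorphism out of the group of $\tilde L$, observe that the two end generators coincide there, and conclude by injectivity. The two ingredients, however, are supplied differently.

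\begin{itemize}
\item \emph{How injectivity enters.} The paper works with the associated group. It converts admissibility into injectivity of $\eta_X\colon X\to\mathrm{As}(X)$ and uses functoriality of $\mathrm{As}$ to get $\eta_X\circ C=\mathrm{As}(C)\circ\eta_{Q(\tilde L)}$. You instead compose directly with the given embedding $\iota\colon X\to\mathrm{Conj}(G)$. This bypasses the universal property of $\mathrm{As}$ and the $\eta_X$-injectivity proposition entirely.
\item \emph{How the end generators are identified.} The paper quotes $\eta_{Q(\tilde L)}(a_s)=\eta_{Q(\tilde L)}(a_t)$ in $\mathrm{As}(Q(\tilde L))\cong G(L)$ from Joyce. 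You prove the analogous identity $m_s=m_t$ geometrically, by sliding the end meridian around the outside of a ball containing the knotted part.
\item \emph{What your route costs.} You need Wirtinger's theorem for the non-compact tangle complement. That is, $\langle \mathrm{Arc}(\tilde D)\mid \text{crossing relations}\rangle$ must be a \emph{complete} presentation of $\pi_1(\mathbb{R}^3\setminus\tilde L)$. This completeness, and not merely the fact that meridians satisfy the crossing relations, is what makes $\rho$ well defined, so you should cite it explicitly.
\item \emph{How the two routes relate.} $\mathrm{As}(Q(\tilde D))$ is literally this Wirtinger group. So your two inputs together amount to exactly the fact the paper imports from Joyce, with the topological reason for $a_s=a_t$ made visible.
\end{itemize}

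Three small points in the geometric step need tightening.
\begin{itemize}
\item ``That space is connected'' is not the relevant property. What you need, and what your cylinder argument actually shows, is that the region outside a large ball minus the two axis tails is homotopy equivalent to a circle. Both end meridians represent the same generator of its fundamental group, with the same rotational sense since both tails are oriented upward.
\item The heights at which you shrink the large circle back to a small meridian must lie outside the vertical extent of the compact part (the knotted arc together with the closed components), not merely below $a$ and above $b$. The definition only fixes the parametrization of the long component outside $(a,b)$, and the knotted arc may wander below height $a$ or above $b$ away from the axis.
\item A cylinder of large radius meets the tangle nowhere. It is the horizontal planes at the two end heights that meet it only in the axis.
\end{itemize}
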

\section{Non-admissible quandles}

In this section, we determine 
whether some quandles are non-admissible 
using the criterion given in \cref{Cor:Non-admissible_condition}.
To apply the criterion, we focus on two of the simplest non-trivial links, namely, the Hopf link $2_{1}^{2}$ and the trefoil knot $3_{1}$.
They are the non-trivial link and the non-trivial knot with the smallest number of crossings, respectively.

\subsection{The $(1,1)$-tangle obtained from the Hopf link}

In this subsection, we investigate admissibility with respect to the Hopf link $2_1^2$.
First, we give the condition arising from the $(1, 1)$-tangle obtained from Hopf link $2_{1}^{2}$. 

\begin{prop}
  A quandle $X$ is not $2_{1}^{2}$-admissible if and only if there exist $x, y \in X$ such that $x \qop y = x$ and $y \qop x \neq y$.
  \label{Prop:HopflinkNAdm}
\end{prop}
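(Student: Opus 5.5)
We fix the standard two-crossing diagram of the Hopf link $2_1^2$ and cut it at a base point on one component; by the symmetry of the Hopf link (an isotopy exchanging the two components and preserving the orientation, and independence of the base point within a component), we expect the resulting $(1,1)$-tangle not to depend on the choice of $p$ up to equivalence, so it suffices to treat one base point. We also use that, if the condition $C(a_s)=C(a_t)$ holds for every coloring of one diagram of $\tilde{L}$, it holds for every diagram: colorings of diagrams related by Reidemeister moves away from the fixed ends correspond bijectively, and the colors of the end arcs are preserved. So we may work with one specific diagram $\tilde{D}$.

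The diagram $\tilde{D}$ has arcs $a_s$, $a_t$ on the cut component and two arcs $b_1,b_2$ on the closed component, with two crossings. At the first crossing the strand $a_s$ passes under $b_1$ and becomes $a_t$; at the other crossing the closed component passes under the long strand, so that $b_1$ becomes $b_2$ under the arc $a_s$ or $a_t$. Because the long strand is broken only at the crossing where it passes under, its over-arc at the other crossing is one of $a_s$, $a_t$; we choose $p$ so that this over-arc is $a_s$. Writing $x=C(a_s)$, $y=C(b_1)$, the coloring conditions (with orientations chosen appropriately; the other orientation only replaces $\qop$ by $\overline{\qop}$ in some places, which is handled the same way) become
\[
C(a_t)=x\qop y,\qquad C(b_2)=y\qop x,\qquad C(b_1)=C(b_2)\ \text{(closing the component)},
\]
up to which of $b_1,b_2$ plays the role of over-arc at the first crossing. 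The closed-component condition is $y\qop x=y$. Thus colorings of $\tilde{D}$ correspond bijectively to pairs $(x,y)$ with $y\qop x=y$, and then $C(a_s)=x$, $C(a_t)=x\qop y$.

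Consequently $X$ fails to be $2_1^2$-admissible iff there exist $x,y$ with $y\qop x=y$ and $x\qop y\neq x$. Interchanging the names of $x$ and $y$ gives exactly the stated condition: $x\qop y=x$ and $y\qop x\neq y$. For the converse, given such a pair we define the coloring explicitly, and it violates $C(a_s)=C(a_t)$.

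The main obstacle is the bookkeeping of orientations and over-arc labels. We must check that for the orientation of $2_1^2$ under consideration, any $\overline{\qop}$ that appears can be converted to $\qop$: the equations $y\,\overline{\qop}\,x=y$ and $y\qop x=y$ are equivalent, as are $x\,\overline{\qop}\,y\neq x$ and $x\qop y\neq x$, since $S_x$ and $S_y$ are bijections. We must also check that the equations come out as stated for both choices of base point and both relative orientations of the components (the positive and negative Hopf links). Once this is verified, the same criterion appears in every case.
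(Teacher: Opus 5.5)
Your proposal is correct and follows the paper's argument: a direct computation of the colorings of the two-crossing $(1,1)$-tangle diagram of the Hopf link. The one real difference is that you place $p$ so that $a_s$ is the over-arc at the second crossing, giving $y \qop x = y$ directly, whereas the paper puts $a_t$ there and reduces $x \qop (y \qop x) = x$ to $x \qop y = x$ by right self-distributivity. One small slip: the closed component passes under only once, so it is a single arc rather than two arcs $b_1, b_2$, though your identification $C(b_1)=C(b_2)$ amounts to exactly that.
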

\begin{proof}
  Let $D$ be the diagram of the $(1,1)$-tangle obtained from the Hopf link $2_{1}^{2}$ as shown in \cref{Fig:hopftanglediagram}.
  Then, any $X$-coloring of $D$ is determined as follows:
  first,
  we assign two elements $x, y \in X$ to the arcs as in \cref{Fig:hopftanglediagram}.
  Then, the element assigned to the arc $a_{t}$ is determined by the relation at the crossing $c_{1}$.
  Finally, if the elements $x$ and $y$ satisfy the relation at $c_{2}$, which is given by
  \begin{equation*}
    x \qop (y \qop x) = x \Leftrightarrow x \qop y = x,
  \end{equation*}
  then the map $C: \mathrm{Arc}(D) \to X$ is an $X$-coloring.
  In addition,
  it satisfies
  \begin{equation*}
    C(a_{s}) \neq C(a_{t}) \Leftrightarrow y \qop x \neq y.
  \end{equation*}
  We note that the above condition does not depend on the choice of an orientation of $2_{1}^{2}$ and a base point.
  Therefore,
  the quandle $X$ satisfies the condition of this proposition if and only if $X$ is not $2_{1}^{2}$-admissible.  
  \begin{figure}[h]
    \centering
    \includegraphics{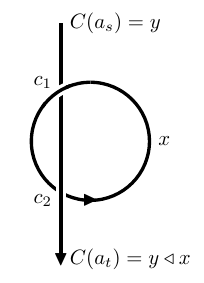}
    \caption{An $X$-colored $(1,1)$-tangle diagram $D$ obtained from a diagram of the Hopf link $2_{1}^{2}$}
    \label{Fig:hopftanglediagram}
  \end{figure}
\end{proof}

The following is immediately obtained from \cref{Cor:Non-admissible_condition}.

\begin{cor}
  Let $X$ be a quandle.
  If there exist $x, y \in X$ such that $x \qop y = x$ and $y \qop x \neq y$, then $X$ is non-admissible.
  \label{Prop:ConditionbyHopflink}
\end{cor}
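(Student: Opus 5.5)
The plan is to chain the two results immediately preceding the statement. Suppose $X$ is a quandle and that $x, y \in X$ satisfy $x \qop y = x$ and $y \qop x \neq y$. By \cref{Prop:HopflinkNAdm}, which characterizes non-$2_{1}^{2}$-admissibility by exactly this condition, $X$ is not $2_{1}^{2}$-admissible.

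Unwinding the definition, this means there is a base point $p$ on the Hopf link for which $X$ fails to be $(2_{1}^{2},p)$-admissible. Concretely, the coloring $C$ of the diagram $D$ in \cref{Fig:hopftanglediagram}, built from $x$ and $y$, is a valid $X$-coloring because $x \qop y = x$. It also satisfies $C(a_{s}) \neq C(a_{t})$ because $y \qop x \neq y$. Hence $X$ is not $(L,p)$-admissible for $L = 2_{1}^{2}$ and this base point $p$.

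Finally, \cref{Cor:Non-admissible_condition} (equivalently, the contrapositive of \cref{Prop:K-admissible_condition}) gives that $X$ is non-admissible.

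There is no real obstacle here. The only point to check is that ``not $2_{1}^{2}$-admissible'' produces a specific base point $p$ to which \cref{Cor:Non-admissible_condition} applies. This is immediate from the definition of $L$-admissibility as $(L,p)$-admissibility for every $p$. The explicit coloring in the proof of \cref{Prop:HopflinkNAdm} exhibits such a $p$ directly.
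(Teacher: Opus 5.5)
Your proposal is correct and matches the paper's argument. The paper obtains this corollary by combining Proposition~\ref{Prop:HopflinkNAdm} with Corollary~\ref{Cor:Non-admissible_condition}, just as you do, and your remark that failing $2_{1}^{2}$-admissibility produces a specific base point $p$ is the only bookkeeping needed.
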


\begin{rem}
  The contrapositive of \cref{Prop:ConditionbyHopflink} also follows from a group-theoretic calculation.
  Indeed, for elements $x,y$ in a group, the identity $y^{-1}xy = x$
  holds if and only if $x^{-1}yx = y$.
  Thus, in any conjugation quandle, if $x \qop y = x$, then $y \qop x = y$.
  Since every admissible quandle is isomorphic to a conjugation quandle, the same implication holds in every admissible quandle.
\end{rem}

As an application of \cref{Prop:HopflinkNAdm}, we construct many non-admissible quandles.

\begin{thm}
  Let $G$ be a group and
  $N \subset G$ a normal subgroup of $G$.
  We define the binary operation $\qop$ on $G \times N$ by 
  \[(g_{1}, n_{1}) \qop (g_{2}, n_{2}) = ((g_{2} n_{2})^{-1} (g_{1} n_{1}) g_{1} (g_{1} n_{1})^{-1} (g_{2} n_{2}), (g_{2} n_{2})^{-1} (g_{1} n_{1}) n_{1} (g_{1} n_{1})^{-1} (g_{2} n_{2})).\]
  Then the following hold:
  \begin{itemize}
    \item[(i)] $X = (G \times N, \qop)$ is a quandle.
    \item[(ii)] If $N \not\subset Z(G)$, then the quandle $X$ is non-admissible.
  \end{itemize}
  \label{Thm:Hopflink}
\end{thm}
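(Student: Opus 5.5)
The plan is to simplify the operation by a change of coordinates that makes its structure transparent, then verify the quandle axioms directly and apply \cref{Prop:ConditionbyHopflink}.

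\emph{Change of coordinates.} For $(g,n) \in G \times N$, put $h = gn$. The map $(g,n) \mapsto (gn, n)$ is a bijection $G\times N \to G \times N$, with inverse $(h,n)\mapsto (hn^{-1},n)$. Multiplying the two coordinates of $(g_1,n_1)\qop(g_2,n_2)$ gives
\[
h_2^{-1}h_1 g_1 n_1 h_1^{-1}h_2 = h_2^{-1}h_1h_2 .
\]
Hence, in the new coordinates, the operation transported along this bijection reads
\[
(h_1,n_1)\qop(h_2,n_2) = \bigl(h_2^{-1}h_1h_2,\; c^{-1} n_1 c\bigr), \qquad c = h_1^{-1}h_2 .
\]
The second coordinate lies in $N$ because $N$ is normal. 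It suffices to prove both claims for this isomorphic operation.

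\emph{Proof of (i).} Idempotency holds because $c=1$ when $(h_1,n_1)=(h_2,n_2)$. For bijectivity of $S_{(h_2,n_2)}$, I will write down the inverse explicitly. Given $(h',n')$, set $h_1 = h_2h'h_2^{-1}$ and $n_1 = c\,n'c^{-1}$ with $c=h_1^{-1}h_2$; both are uniquely determined. For self-distributivity, the first coordinate is just $\mathrm{Conj}(G)$, so it holds there. For the second coordinate, I compose the conjugators. On the left side, $(x\qop y)\qop z$ conjugates $n_1$ by
\[
h_1^{-1}h_2\cdot(h_2^{-1}h_1h_2)^{-1}h_3 = h_1^{-2}h_2h_3 .
\]
On the right side, $(x\qop z)\qop(y\qop z)$ conjugates $n_1$ by
\[
h_1^{-1}h_3\cdot(h_3^{-1}h_1h_3)^{-1}(h_3^{-1}h_2h_3) = h_1^{-2}h_2h_3 .
\]
The two conjugators agree, so the axiom holds. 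This bookkeeping is the main point needing care, and it is routine once the coordinate change is made.

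\emph{Proof of (ii).} By \cref{Prop:ConditionbyHopflink}, it suffices to find $x,y$ with $x\qop y = x$ and $y\qop x\neq y$. Since $N\not\subset Z(G)$, choose $m\in N$ and $h\in G$ with $hmh^{-1}\neq m$. In the new coordinates, take $x=(1,1)$ and $y=(h,m)$. Then
\[
x\qop y = (h^{-1}h,\; h^{-1}1h) = (1,1) = x ,
\]
while
\[
y\qop x = (h,\; hmh^{-1}) \neq (h,m) = y .
\]
In the original coordinates these elements are $x=(1,1)$ and $y=(hm^{-1},m)$. Hence $X$ is not $2_1^2$-admissible, and it is non-admissible by \cref{Cor:Non-admissible_condition}.
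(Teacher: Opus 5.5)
Your proof is correct and follows the paper's approach. For (ii) you use the same witnesses, $x=(e,e)$ and a $y$ built from a non-commuting pair (your $y=(hm^{-1},m)$ is the paper's $(A,B)$ with $A=hm^{-1}$, $B=m$), and you conclude via the Hopf-link criterion. For (i) the paper only says ``verified by direct calculation''; your substitution $(g,n)\mapsto(gn,n)$ reduces the operation to $(h_2^{-1}h_1h_2,\,c^{-1}n_1c)$ with $c=h_1^{-1}h_2$, which is a clean way of carrying out that calculation, and your checks of idempotency, bijectivity and self-distributivity are all correct.
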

\begin{proof}
  (i) This is verified by direct calculation.

  (ii) Assume $N \not\subset Z(G)$.
  Then there exist $A \in G$ and $B \in N$ such that $AB \neq BA$.
  Set $x = (e,e), y = (A, B) \in X$.
  Then
  \begin{align*}
    x \qop y & = (e,e) \qop (A, B) \\
    & = (e,e),\\
    y \qop x & = (A, B) \qop (e,e) \\
    & = ((AB)A(AB)^{-1}, (AB)B(AB)^{-1})\\
    & = ((AB)A(AB)^{-1}, ABA^{-1}).
  \end{align*}
  Since $AB \neq BA$, we have $ABA^{-1} \neq B$.
  Hence, $y \qop x \neq y.$
  Using Proposition~\ref{Prop:ConditionbyHopflink},
  the quandle $X$ is non-admissible.
\end{proof}

Unfortunately, \cref{Prop:ConditionbyHopflink} 
cannot be used to detect non-admissibility of
generalized Alexander quandles.

\begin{prop}
    Every generalized Alexander quandle is $2_{1}^{2}$-admissible.
    \label{Prop:Hopf_vs_GAlex}
\end{prop}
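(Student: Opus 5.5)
By \cref{Prop:HopflinkNAdm}, it suffices to show that $\mathrm{GAlex}(G,\sigma)$ contains no pair $x,y$ with $x \qop y = x$ and $y \qop x \neq y$. Equivalently, for all $x, y \in G$, the identity $x \qop y = x$ should imply $y \qop x = y$. The plan is to rewrite both identities as conditions on the single element $u := x y^{-1} \in G$, using that $\sigma$ is an automorphism.

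First I will unwind $x \qop y = x$. By definition it reads $\sigma(x y^{-1})\, y = x$. Multiplying on the right by $y^{-1}$ gives the equivalent condition $\sigma(u) = u$, so $u$ is a fixed point of $\sigma$.

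Next I will compute $y \qop x = \sigma(y x^{-1})\, x$. Since $y x^{-1} = u^{-1}$ and $\sigma$ is a group homomorphism, $\sigma(y x^{-1}) = \sigma(u)^{-1}$. If $\sigma(u) = u$, this gives
\[
y \qop x = u^{-1} x = y x^{-1} x = y .
\]
The argument is reversible, so in fact $x \qop y = x$ holds if and only if $y \qop x = y$. In either direction, the pair required by \cref{Prop:HopflinkNAdm} cannot exist, and $\mathrm{GAlex}(G,\sigma)$ is $2_{1}^{2}$-admissible.

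I do not expect any real obstacle here. The only point needing care is to use the operation exactly as defined in \cref{Ex:GAlex}, namely $g \qop h = \sigma(g h^{-1}) h$, so that both conditions reduce to the same fixed-point condition on $x y^{-1}$. This shows that the symmetry of commutation noted in the remark after \cref{Prop:ConditionbyHopflink} also holds for generalized Alexander quandles, even though they are not assumed to be conjugation quandles.
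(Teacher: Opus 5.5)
Your proposal is correct and follows the paper's proof essentially line for line. Both reduce via \cref{Prop:HopflinkNAdm} to showing that $x \qop y = x$ implies $y \qop x = y$, note that $x \qop y = x$ means $xy^{-1}$ is fixed by $\sigma$, and then compute $y \qop x = \sigma(xy^{-1})^{-1}x = (xy^{-1})^{-1}x = y$; your remark that the implication is reversible is a correct but unneeded strengthening.
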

\begin{proof}
    Let $X := \GAlex(G, \sigma)$ be a generalized Alexander quandle.
    Let us take $x, y \in X$.
    By \cref{Prop:HopflinkNAdm},
    it is enough to show that if $x \qop y = x$, then $y \qop x = y$.
    Here, we have 
    \[
        x = x \qop y = \sigma(xy^{-1})y.
    \]
    and hence $x y^{-1} = \sigma(xy^{-1})$.
    Thus, 
    \[
        y \qop x = \sigma(yx^{-1}) x = (\sigma(xy^{-1}))^{-1} x = (xy^{-1})^{-1}x = y,
    \]
    as desired.
\end{proof}

\subsection{The $(1,1)$-tangle obtained from the trefoil knot}

The trefoil knot $3_1$ is the nontrivial knot with the smallest crossing number.
In this subsection, we give some examples of 
quandles that are not $3_1$-admissible. 
First, we give the condition with respect to the $(1, 1)$-tangle obtained from the trefoil knot.

\begin{prop}\label{Prop:31-admissible}
  A quandle $X$ is not $3_1$-admissible
  if and only if 
  there exist elements $x, y \in X$ such that $(x \triangleleft y) \triangleleft x = y$ and $(y \triangleleft x) \triangleleft y \neq x$.
\end{prop}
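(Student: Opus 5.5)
I would follow the template of the proof of \cref{Prop:HopflinkNAdm}: fix the standard three-crossing diagram of the trefoil, cut it at a base point on one arc to obtain a diagram $D$ of the $(1,1)$-tangle $\tilde{3_1}$, and parametrize all $X$-colorings of $D$ by a small number of free colors. Since $3_1$ is a knot, the remark after the definition of $(L,p)$-admissibility says the tangle is independent of the base point. So it suffices to handle a single base point. The orientation also needs addressing, but the trefoil is invertible, so reversing orientation gives an equivalent oriented knot and hence the same tangle up to equivalence. I would note this explicitly.

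Concretely, $D$ has four arcs: $a_s$, $a_t$, and two interior arcs, with three crossings $c_1,c_2,c_3$. I would assign $C(a_s)=x$ and $C(a_2)=y$, where $a_2$ is the arc that the segment starting at $a_s$ passes under at the first crossing. Traversing the tangle from $a_s$, the first crossing then forces the next arc to be colored $x\qop y$. The second crossing, whose over-arc is the one colored $x$ or $y$ depending on the diagram, forces the following color. In the standard trefoil each arc passes under the arc two steps ahead. I expect the coloring to look as follows:
\begin{itemize}
\item $a_s\mapsto x$,
\item the next arc $\mapsto x\qop y$,
\item the arc after that $\mapsto (x\qop y)\qop x$, which must equal $y$ if that arc is $a_2$,
\item $a_t\mapsto y\qop(x\qop y)$, or equivalently, after using the first relation, $(y\qop x)\qop y$.
\end{itemize}
The precise bookkeeping, namely which crossing yields a free constraint and which defines $C(a_t)$, must be read off the figure.

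With this labeling, one crossing gives the consistency condition $(x\qop y)\qop x=y$. This is the relation that makes the assignment a genuine coloring, exactly as the relation at $c_2$ did for the Hopf link. The remaining crossing defines $C(a_t)$. The condition $C(a_s)\neq C(a_t)$ then becomes $(y\qop x)\qop y\neq x$, possibly after rewriting with the quandle axioms. For instance, $y\qop(x\qop y)=(y\qop x)\qop y$ follows from right distributivity together with $y\qop y=y$. Conversely, any pair $x,y$ satisfying the first relation produces a coloring by this recipe. Hence the set of colorings is in bijection with such pairs, and the equivalence in \cref{Prop:31-admissible} follows.

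The main obstacle is the careful bookkeeping of the diagram: choosing the cut so that the constraint and the value of $C(a_t)$ take exactly the stated forms, and checking that the under-arc, over-arc and output match the convention of \cref{Fig:Coloring_condition}. If a naive choice yields, say, $C(a_t)=y\qop(x\qop y)$ or a relation involving $\overline{\qop}$, I would use right distributivity and the bijectivity of $S_y$ to transform it into the stated form. I would also check mirror-image issues: the left- and right-handed trefoils might give the conditions with $\overline{\qop}$ instead. In that case I would fix the chirality implicitly intended by the figure, or show that the two conditions are equivalent by substituting suitable elements.
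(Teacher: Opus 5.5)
Your proposal is correct and is essentially the paper's argument: parametrize the colorings of the cut trefoil diagram by $x=C(a_s)$ and $y$, read off the consistency relation $(x\qop y)\qop x=y$ at one crossing and $C(a_t)=y\qop(x\qop y)=(y\qop x)\qop y$ at the last, and use base-point independence and invertibility of $3_1$ to dispose of the choices of base point and orientation. Your bookkeeping is right when the cut is placed so that $a_s$ carries the overcrossing, and your chirality worry dissolves: the $\overline{\qop}$-version of the two conditions for the pair $(x,y)$ is exactly the stated condition for the pair $(y,x)$.
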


\begin{proof}
  This is proved similarly to the proof of \cref{Prop:HopflinkNAdm}.
  Since the trefoil knot $3_1$ is invertible, the conditions do not depend on the choice of an orientation of $3_{1}$.
  We note that the first equality is given by the relation corresponding to the crossing $c_3$, 
  and the second inequality is given by the elements assigned to the arcs $a_s$ and $a_t$.
\end{proof}

By \cref{Prop:31-admissible} and \cref{Cor:Non-admissible_condition},
the following corollary follows.

\begin{cor}\label{Prop:ConditionbyTrefoil}
  Let $X$ be a quandle.
  If there exist $x, y \in X$ such that $(x \triangleleft y) \triangleleft x = y$ and $(y \triangleleft x) \triangleleft y \neq x$, then $X$ is non-admissible.
\end{cor}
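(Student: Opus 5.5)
This follows by chaining two results already stated. Suppose $x, y \in X$ satisfy $(x \qop y) \qop x = y$ and $(y \qop x) \qop y \neq x$. By \cref{Prop:31-admissible}, the existence of such a pair is exactly the condition for $X$ to fail to be $3_1$-admissible. Concretely, $X$ is not $(3_1, p)$-admissible for a base point $p$ on the trefoil. By the remark following the definition of $(L,p)$-admissibility, the choice of $p$ is irrelevant for a knot. We then apply \cref{Cor:Non-admissible_condition} with $L = 3_1$ and this $p$, and conclude that $X$ is non-admissible.

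The substantive content lies in the two ingredients, and I would spell them out as follows. For \cref{Prop:31-admissible}, one checks the explicit coloring. Take the standard three-crossing diagram of the trefoil, cut at a point on one arc, and color the two starting strands by $x$ and $y$. The crossings $c_1$ and $c_2$ force the colors $x \qop y$ and then $(y \qop x)\qop y$, or the symmetric expression, on the outgoing arcs. The remaining crossing $c_3$ imposes the consistency relation $(x\qop y)\qop x = y$. Under these labels, $C(a_s) = x$ and $C(a_t) = (y\qop x)\qop y$. A coloring with $C(a_s) \ne C(a_t)$ therefore exists precisely under the stated hypotheses. The main point to verify is which arc labels coincide with $a_s$ and $a_t$; everything else is bookkeeping.

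As a sanity check independent of diagrams, I would also give the direct group-theoretic argument, in the spirit of the remark after \cref{Prop:ConditionbyHopflink}. Suppose $X \subset \mathrm{Conj}(G)$. The relation $(x\qop y)\qop x = y$ reads $x^{-1}y^{-1}xyx = y$, that is, $xyx = yxy$. Then
\[
(y \qop x)\qop y = y^{-1}x^{-1}yxy = y^{-1}x^{-1}xyx = x,
\]
which contradicts the second hypothesis. So no admissible quandle can contain such a pair. This confirms the corollary without relying on \cref{Prop:K-admissible_condition}, and I expect no real obstacle anywhere in the argument.
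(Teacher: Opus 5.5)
Your proposal is correct and is the paper's argument: the corollary is obtained by combining \cref{Prop:31-admissible} with \cref{Cor:Non-admissible_condition}. Your sketch of the trefoil coloring, with $C(a_s)=x$, $C(a_t)=(y\qop x)\qop y$ and the relation $(x\qop y)\qop x=y$ coming from $c_3$, agrees with the paper's description. Your direct computation is also valid as a standalone proof: in $\mathrm{Conj}(G)$, the relation $xyx=yxy$ forces $(y\qop x)\qop y=x$, and reducing to $X\subset\mathrm{Conj}(G)$ is harmless because an injective homomorphism preserves both the equality and the inequality. It is the trefoil analogue of the paper's remark after \cref{Prop:ConditionbyHopflink}, and it avoids the topological machinery.
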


\begin{figure}[ht]
    \centering
    \includegraphics{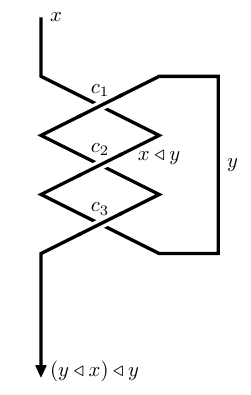}
    \caption{An $X$-colored diagram $D$ of the $(1,1)$-tangle obtained from  $3_{1}$}
    \label{Fig:Trefoiltanglediagram}
\end{figure}

As noted in \cref{Prop:Hopf_vs_GAlex}, 
every generalized Alexander quandle is $2_1^2$-admissible.
In other words, the $(1, 1)$-tangle obtained from the Hopf link $2_{1}^{2}$ does not yield a useful criterion for detecting non-admissibility among generalized Alexander quandles.
In contrast, the $(1, 1)$-tangle obtained from the trefoil knot can yield such a criterion.
Let $Q_8 = \{\pm 1, \pm i, \pm j, \pm k\}$ be the quaternion group, and let $\sigma: Q_8 \to Q_8$ be the group automorphism defined by 
\[
  \sigma(1) = 1, \quad \sigma(i) = j, \quad \sigma (j) = k, \quad \sigma(k) = i.
\]
Then, we obtain the generalized Alexander quandle $\mathrm{GAlex}(Q_8, \sigma)$.
\begin{prop}\label{prop:GAlexQ8}
  The quandle $\GAlex(Q_8, \sigma)$ is not $3_1$-admissible.
\end{prop}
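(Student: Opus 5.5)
By \cref{Prop:31-admissible}, it suffices to exhibit a single pair $x, y \in \GAlex(Q_8,\sigma)$ with $(x \qop y) \qop x = y$ and $(y \qop x) \qop y \neq x$. The plan is to take the simplest candidate, $x = 1$, and reduce both conditions to identities in $Q_8$ involving only $y$, $\sigma(y)$ and $\sigma^2(y)$.

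With $x = 1$ and the operation $g \qop h = \sigma(gh^{-1})h$, the quantities involved become
\[
  x \qop y = \sigma(y^{-1})y, \qquad (x \qop y) \qop x = \sigma\bigl(\sigma(y^{-1})y\bigr) = \sigma^2(y^{-1})\sigma(y).
\]
On the other side,
\[
  y \qop x = \sigma(y), \qquad (y \qop x) \qop y = \sigma\bigl(\sigma(y)y^{-1}\bigr)y = \sigma^2(y)\,\sigma(y^{-1})\,y.
\]
So we need $y \in Q_8$ with
\[
  \sigma^2(y^{-1})\sigma(y) = y \quad\text{and}\quad \sigma^2(y)\sigma(y^{-1})y \neq 1.
\]

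Take $y = i$, so that $\sigma(i) = j$ and $\sigma^2(i) = k$. For the first condition, $\sigma^2(-i)\,j = -kj = i$, so it holds. For the second, $k(-j)i = -(kj)i = i \cdot i = -1 \neq 1$, so it holds as well.

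Hence the pair $(x,y) = (1,i)$ satisfies the criterion of \cref{Prop:31-admissible}, and $\GAlex(Q_8,\sigma)$ is not $3_1$-admissible. The only real obstacle was locating a suitable pair, and the choice $x = 1$ reduces this to a short search over $y$. If $y = i$ had failed, the fallback was to test $y = j$ or $y = k$, and then to try $x \neq 1$. The remaining work is careful quaternion arithmetic (for example $kj = -i$), which I would double-check directly.
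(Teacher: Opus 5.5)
Your proof is correct and follows the same argument as the paper, which also takes $x = 1$, $y = i$ and checks the trefoil criterion. Your explicit computations $(x \qop y) \qop x = \sigma(k) = i = y$ and $(y \qop x) \qop y = -1 \neq 1 = x$ supply the verification that the paper leaves to the reader.
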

\begin{proof}
  Take $x = 1$ and $y = i$. We can verify the conditions in \cref{Prop:ConditionbyTrefoil}, and hence the quandle $\GAlex(Q_8, \sigma)$ is not $3_1$-admissible.
\end{proof}

Let $\HH$ be the set of quaternions, that is, $\HH$ be the real vector space spanned by $\{1, i, j, k\}$.
The norm of $q = a + bi + cj + dk \in \HH$ for $a,b,c,d \in \RR$ is given by 
\[
  ||q|| := \sqrt{a^2+b^2+c^2+d^2}.
\]
The quaternionic unitary group $Sp(1)$ is the simple Lie group consisting of linear isomorphisms on $\HH$ which preserves the norm.
We note that the group $Q_8$ is naturally embedded in $Sp(1)$, and the group automorphism $\sigma: Q_8 \to Q_8$ extends to the group isomorphism $\widetilde{\sigma}: Sp(1) \to Sp(1)$.
Thus, the generalized Alexander quandle $\GAlex(Sp(1), \widetilde{\sigma})$ is smooth.
In addition,
the quandle $\GAlex(Sp(1), \widetilde{\sigma})$ is algebraically and topologically connected.
Since the quandle $\GAlex(Q_8, \sigma)$ is a subquandle of $\GAlex(Sp(1), \widetilde{\sigma})$, we have the following corollary.
Note that this provides a counterexample to Yonemura's conjecture \cite{Yonemura2024}.

\begin{cor}
  The quandle $\GAlex(Sp(1), \widetilde{\sigma})$ is not $3_1$-admissible. 
\end{cor}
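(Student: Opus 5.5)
The plan is to use the criterion of \cref{Prop:31-admissible}: it suffices to find $x, y \in \GAlex(Sp(1), \widetilde{\sigma})$ with $(x \qop y) \qop x = y$ and $(y \qop x) \qop y \neq x$. Both conditions are equalities or inequalities between elements computed by the quandle operation alone. So any pair witnessing non-$3_1$-admissibility inside a subquandle also witnesses it in the ambient quandle.

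The first step is to check that the inclusion $Q_8 \hookrightarrow Sp(1)$ is an injective quandle homomorphism $\GAlex(Q_8, \sigma) \to \GAlex(Sp(1), \widetilde{\sigma})$. The operation in both quandles is $g \qop h = \sigma(gh^{-1})h$. The inclusion is a group homomorphism and $\widetilde{\sigma}$ restricts to $\sigma$ on $Q_8$, so we get
\[
\widetilde{\sigma}(gh^{-1})h = \sigma(gh^{-1})h \quad \text{for } g,h \in Q_8 .
\]
Hence the operations agree on $Q_8$, and $Q_8$ is closed under $\qop$. It is also closed under $\overline{\qop}$, since $x \overline{\qop} y = \sigma^{-1}(xy^{-1})y$ and $\sigma^{-1}$ preserves $Q_8$.

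The second step is to take the witnesses from \cref{prop:GAlexQ8}, namely $x = 1$ and $y = i$. The proof of \cref{prop:GAlexQ8} gives $(x \qop y)\qop x = y$ and $(y \qop x)\qop y \neq x$ in $\GAlex(Q_8,\sigma)$. Because the inclusion is an injective homomorphism, the equality carries over to $Sp(1)$ unchanged. The inequality is preserved by injectivity. By \cref{Prop:31-admissible}, $\GAlex(Sp(1), \widetilde{\sigma})$ is then not $3_1$-admissible.

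If the reader wants this to be self-contained, one can verify the witness directly. We have $1 \qop i = \sigma(-i)i = -ji = k$, and $k \qop 1 = \sigma(k) = i$. Also $i \qop 1 = \sigma(i) = j$, and $j \qop i = \sigma(ji^{-1})i = \sigma(k)i = i^2 = -1 \neq 1$.

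The only potential obstacle is justifying that $\widetilde{\sigma}$ really is an automorphism extending $\sigma$. I would take it to be conjugation by the unit quaternion $(1+i+j+k)/2$, which cyclically permutes $i, j, k$ (or its inverse, depending on the direction). Conjugation by a unit quaternion is a group automorphism of $Sp(1)$, and this one restricts to $\sigma$ on $Q_8$.
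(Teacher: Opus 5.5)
Your proposal is correct and follows the paper's route: the paper also deduces the corollary from $\GAlex(Q_8,\sigma)$ being a subquandle of $\GAlex(Sp(1),\widetilde{\sigma})$, so the witness $x=1$, $y=i$ from \cref{prop:GAlexQ8} works in the larger quandle. Your explicit computations ($1\qop i=k$, $k\qop 1=i$, $i\qop 1=j$, $j\qop i=-1$) check out, and so does realizing $\widetilde{\sigma}$ as $q\mapsto uqu^{-1}$ with $u=(1+i+j+k)/2$ (indeed $uiu^{-1}=j$); the paper leaves both of these implicit.
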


The rest of this section presents the result of a computer calculation. 
Higashitani--Kamada--Kosaka--Kurihara \cite{Higashitani-2024-ClassificationGeneralizedAlexanderQuandlesa} gave an algorithm to determine whether given two generalized Alexander quandles are isomorphic or not.
By applying the algorithm, they classified finite generalized Alexander quandles of order less than $128$, 
and found $8726$ quandles up to isomorphism.
We checked the condition in \cref{Prop:ConditionbyTrefoil}  using a computer, and we obtained the following result.

\begin{prop}
  There are $287$ quandles of order less than $128$ that are not $3_1$-admissible.
\end{prop}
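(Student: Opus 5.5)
This is a computational statement, so the plan is to reduce it to a finite check and describe how to carry it out exhaustively and reliably. By \cref{Prop:31-admissible}, a quandle $X$ fails to be $3_1$-admissible exactly when there are $x,y\in X$ with $(x\qop y)\qop x=y$ and $(y\qop x)\qop y\neq x$. This condition depends only on the quandle operation, so it is an isomorphism invariant. It therefore suffices to test one representative from each isomorphism class in the list of $8726$ generalized Alexander quandles of order less than $128$ produced by Higashitani--Kamada--Kosaka--Kurihara \cite{Higashitani-2024-ClassificationGeneralizedAlexanderQuandlesa}, and to count the representatives that admit such a pair.

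The procedure runs as follows. First I would regenerate that list: enumerate the groups $G$ of order $n<128$ (for instance from the SmallGroups library), choose representatives $\sigma$ of the relevant classes in $\mathrm{Aut}(G)$, and use the authors' isomorphism algorithm to retain one quandle per class. I would then confirm that exactly $8726$ survive, which guards against errors in the input data. For each representative $\GAlex(G,\sigma)$ I would build the $n\times n$ operation table $x\qop y=\sigma(xy^{-1})y$. Finally I would loop over all ordered pairs $(x,y)$ and test the two conditions of \cref{Prop:ConditionbyTrefoil}, stopping at the first witness.

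Two reductions make this cheap. Since $x\mapsto x\qop g$ is an automorphism of the quandle, I can conjugate a witness so that its first coordinate is the identity $e$, provided this is legitimate. Any map $z\mapsto zg$ satisfies $(zg)\qop(wg)=\sigma(zw^{-1})wg=(z\qop w)g$, so right translation is an automorphism of $\GAlex(G,\sigma)$. I may therefore take $x=e$, which leaves only $n$ choices of $y$ per quandle. For $x=e$ the two conditions read $\sigma(\sigma(y^{-1})y)\,\sigma(y^{-1})y=y$ and $(y\qop e)\qop y\neq e$, each of which costs $O(1)$ group operations. The whole computation is thus linear in the total size of all the groups and is trivial once the classification list is available. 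As a check on correctness, $\GAlex(Q_8,\sigma)$ from \cref{prop:GAlexQ8} must appear among those flagged. Any quandle in which $\sigma$ is inner-like or $G$ is abelian must not be flagged, since Alexander quandles are admissible.

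The main obstacle is not the test itself but reproducing the isomorphism classification faithfully. The count of $287$ is meaningful only if each class is represented exactly once, so I would rely on the published list or the authors' code rather than reimplementing the classification. I would then cross-validate the $3_1$ test with two independent implementations, say GAP and a direct table-based script, before reporting the number.
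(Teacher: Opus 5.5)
Your plan follows the paper's route. The statement concerns the $8726$ generalized Alexander quandles of order less than $128$ in the Higashitani--Kamada--Kosaka--Kurihara list, and the paper simply runs the test of \cref{Prop:31-admissible} on each of them by computer. Your reduction to $x=e$ is also legitimate. The justification, though, is the right-translation automorphism $z\mapsto zg$ that you verify, not the maps $S_g$, which can move every $x$ to $e$ only when the quandle is connected.

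Two specifics of your plan are wrong.

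\emph{The simplified test at $x=e$ is miscomputed.} Since $a\qop e=\sigma(a)$, one has $(e\qop y)\qop e=\sigma\bigl(\sigma(y^{-1})y\bigr)$. So the first condition is $\sigma\bigl(\sigma(y^{-1})y\bigr)=y$, with no trailing factor $\sigma(y^{-1})y$. Your version rejects the paper's own witness. For $y=i$ in $\GAlex(Q_8,\sigma)$ one has $\sigma(y^{-1})y=k$ and $\sigma(k)=i=y$, whereas your expression gives $\sigma(k)k=ik=-j\neq i$. Checking all eight $y\in Q_8$, no element satisfies your condition together with the second one. Hence $\GAlex(Q_8,\sigma)$ would not be flagged and the count would be wrong. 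Your $Q_8$ control would expose this, but the formula itself has to be corrected before the computation means anything.

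\emph{The negative control for ``inner-like'' $\sigma$ is invalid if it means inner $\sigma$.} Let $G=SL_2(\mathbb{F}_3)$ and $\sigma(g)=hgh^{-1}$ with $h=\bigl(\begin{smallmatrix}1&1\\0&1\end{smallmatrix}\bigr)$. Take $x=I$ and $y=\bigl(\begin{smallmatrix}0&-1\\1&0\end{smallmatrix}\bigr)$. Then $x\qop y=\bigl(\begin{smallmatrix}2&1\\1&1\end{smallmatrix}\bigr)$ and $(x\qop y)\qop x=y$. On the other hand, $(y\qop x)\qop y=y^{2}=-I\neq x$. So this order-$24$ generalized Alexander quandle with inner $\sigma$ is not $3_1$-admissible, hence non-admissible.

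This is really the paper's $Q_8$ example in disguise. Indeed $Q_8\trianglelefteq SL_2(\mathbb{F}_3)$, and conjugation by $h$ restricts to an order-$3$ automorphism of $Q_8$, which is conjugate to $\sigma$ in $\mathrm{Aut}(Q_8)\cong S_4$. The abelian case, where $\GAlex(G,\sigma)$ is an Alexander quandle and hence admissible, remains a sound negative control; the inner case does not.
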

\section{Proof of \cref{Prop:K-admissible_condition}}
\label{Sec:Proof}

\subsection{Algebraic condition}
First, we recall the condition of admissibility for quandles.
The condition relates to the associated groups of quandles.

\begin{defn}
  Let $X = (X, \qop)$ be a quandle.
  The \textit{associated group} of $X$, 
  denoted by $\mathrm{As}(X)$,
  is the group defined by the following group presentation:
  \begin{equation*}
    \mathrm{As}(X) = \left\langle x \in X \mid x \qop y = y^{-1} x y\ (x, y \in X)\right\rangle. 
  \end{equation*}
\end{defn}

The \textit{natural map} $\eta_{X}: X \to \mathrm{As}(X)$ is the composition of the inclusion map $\iota: X \to F_{\text{grp}}(X)$ and the quotient map $\pi: F_{\text{grp}}(X) \to \mathrm{As}(X)$, i.e.,
$\eta_{X} = \pi \circ \iota$.
We note that, when regarded as the map $\eta_{X}: X \to \mathrm{Conj}(\mathrm{As}(X))$, the map $\eta_{X}$ is a quandle homomorphism.

\begin{rem}
The assignment that associates to a quandle with its associated group defines a functor from the category of quandles to the category of groups \cite{Joyce-1982-ClassifyingInvariantKnotsKnotb}.
\label{Rem:AsFunctor}
\end{rem}

\begin{prop}[\cite{Joyce-1982-ClassifyingInvariantKnotsKnotb}]
Let $X$ be a quandle and $G$ a group.
For any quandle homomorphism $f: X \to \mathrm{Conj}(G)$,
there exists a unique group homomorphism $f_{\ast}: \mathrm{As}(X) \to G$ such that the following diagram is commutative:
\[
  \begin{CD}
        X @>{\eta_{X}}>> \mathrm{As}(X) \\
    @V{f}VV             @VV{f_{\ast}}V \\
    \mathrm{Conj}(G) @>{\mathrm{id}}>> G
  \end{CD}
\]
\label{Prop:Uni_property_of_As}
\end{prop}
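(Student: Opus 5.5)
The statement is Joyce's universal property of $\mathrm{As}(X)$. The proof is the standard argument: extend $f$ to the free group, check that it kills the relators, and descend to the quotient. Uniqueness then comes from the fact that $\eta_X(X)$ generates $\mathrm{As}(X)$.

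\textbf{Existence.} Given a quandle homomorphism $f\colon X\to\mathrm{Conj}(G)$, regard $f$ as a map of sets $X\to G$. By the universal property of the free group, $f$ extends uniquely to a group homomorphism $\tilde f\colon F_{\text{grp}}(X)\to G$ with $\tilde f\circ\iota=f$. We must show that $\tilde f$ vanishes on the normal subgroup generated by the relators $(x\qop y)^{-1}y^{-1}xy$ for $x,y\in X$. For each pair $x,y$,
\[
  \tilde f\bigl((x\qop y)^{-1}y^{-1}xy\bigr)=f(x\qop y)^{-1}f(y)^{-1}f(x)f(y).
\]
Since $f$ is a quandle homomorphism into $\mathrm{Conj}(G)$, we have $f(x\qop y)=f(x)\qop f(y)=f(y)^{-1}f(x)f(y)$, so this element is trivial. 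The kernel of $\tilde f$ is normal, so it contains the normal closure of the relators. Hence $\tilde f$ factors through $\pi$ as $\tilde f=f_\ast\circ\pi$ for a group homomorphism $f_\ast\colon\mathrm{As}(X)\to G$. This gives $f_\ast\circ\eta_X=f_\ast\circ\pi\circ\iota=\tilde f\circ\iota=f$, which is the commutativity of the diagram.

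\textbf{Uniqueness.} $\mathrm{As}(X)$ is generated by $\eta_X(X)$, because $F_{\text{grp}}(X)$ is generated by $\iota(X)$ and $\pi$ is surjective. If $h\colon\mathrm{As}(X)\to G$ is another group homomorphism with $h\circ\eta_X=f$, then $h$ and $f_\ast$ agree on a generating set, and therefore $h=f_\ast$.

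No step here is a real obstacle. The only point needing care is the relator check, which is precisely the definition of a homomorphism into $\mathrm{Conj}(G)$.
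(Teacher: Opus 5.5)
Your proof is correct: you extend $f$ to $F_{\text{grp}}(X)$, observe that each relator $(x\qop y)^{-1}y^{-1}xy$ is sent to the identity because $f(x\qop y)=f(y)^{-1}f(x)f(y)$, pass to the quotient, and get uniqueness because $\eta_X(X)$ generates $\mathrm{As}(X)$. The paper gives no proof of its own and only cites Kamada's book, so there is no in-paper argument to compare against; yours is the standard universal-property argument for a group presentation.
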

See \cite{Kamada-2017-SurfaceKnots4Spacea} for a proof of \cref{Prop:Uni_property_of_As}.

\begin{prop}[\cite{Joyce-1982-ClassifyingInvariantKnotsKnotb,Kamada-2005-EnvelopingMonoidalQuandles}]
  Let $X = (X, \triangleleft)$ be a quandle.
  The natural map $\eta_{X}: X \to \mathrm{As}(X)$ is injective if and only if $X$ is admissible.
  \label{Prop:Admissible}
\end{prop}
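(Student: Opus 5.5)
The statement is the classical fact that $\eta_X$ is injective exactly when $X$ is admissible, and I will prove the two implications separately, using only the universal property in \cref{Prop:Uni_property_of_As}.

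\emph{($\Rightarrow$)} Suppose $\eta_X$ is injective. As noted after the definition of the natural map, $\eta_X\colon X \to \mathrm{Conj}(\mathrm{As}(X))$ is a quandle homomorphism. The reason is that for any $x,y\in X$ the defining relations of $\mathrm{As}(X)$ give $\eta_X(x\qop y) = \eta_X(y)^{-1}\eta_X(x)\eta_X(y) = \eta_X(x)\qop \eta_X(y)$. An injective quandle homomorphism into $\mathrm{Conj}(G)$, with $G=\mathrm{As}(X)$, is exactly what the definition of admissibility requires. So $X$ is admissible, and its image $\eta_X(X)$ is a conjugation quandle isomorphic to $X$.

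\emph{($\Leftarrow$)} Suppose $X$ is admissible, and let $f\colon X\to \mathrm{Conj}(G)$ be an injective quandle homomorphism. By \cref{Prop:Uni_property_of_As} there is a group homomorphism $f_\ast\colon \mathrm{As}(X)\to G$ with $f = f_\ast\circ \eta_X$. Let $x,y\in X$ with $\eta_X(x)=\eta_X(y)$. Applying $f_\ast$ gives $f(x)=f_\ast(\eta_X(x)) = f_\ast(\eta_X(y)) = f(y)$, and injectivity of $f$ gives $x=y$. Hence $\eta_X$ is injective.

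Both directions are formal. The only points needing care are that $\eta_X$ respects the operation when the target is viewed as $\mathrm{Conj}(\mathrm{As}(X))$, which holds directly by the defining relations, and that the factorization $f=f_\ast\circ\eta_X$ exists, which is the cited universal property. So there is no real obstacle.
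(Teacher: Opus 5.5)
Your proof is correct and follows the same route as the paper. For ($\Rightarrow$) you use that $\eta_X\colon X\to\mathrm{Conj}(\mathrm{As}(X))$ is itself an injective quandle homomorphism, and for ($\Leftarrow$) you factor $f=f_\ast\circ\eta_X$ via the universal property and deduce injectivity of $\eta_X$ from that of $f$.
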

\begin{proof}
  If the natural map $\eta_{X}: X \to \mathrm{As}(X)$ is injective, then the map $\eta_{X}: X \to \mathrm{Conj}(\mathrm{As}(X))$ is an injective quandle homomorphism. 
  Hence, the quandle $X$ is admissible.

  Conversely,
  assume that the quandle $X$ is admissible.
  Then, there exist a group $G$ and an injective quandle homomorphism $f: X \to \mathrm{Conj}(G)$.
  Using \cref{Prop:Uni_property_of_As},
  there exists a unique group homomorphism $f_{\ast}: \mathrm{As}(X) \to G$ such that $f = f_{\ast} \circ \eta_{X}$.
  Since $f$ is injective, the map $f_{\ast} \circ \eta_X$ is also injective.
  Hence, the natural map $\eta_{X}$ is injective. 
\end{proof}

\subsection{Topological condition}
Let $L$ be an oriented link or a $(1,1)$-tangle, and let $D$ be a diagram of $L$.
The \textit{fundamental quandle} of $D$ is the quandle generated by $\mathrm{Arc}(D)$, the set of all arcs of $D$, 
with the relations $\{R(c_i)\}$ corresponding to each crossing $c_i$ of $D$:
\begin{equation*}
  Q(D) = \left\langle \mathrm{Arc}(D) \mid R(c_{1}), \ldots, R(c_{m}) \right\rangle_{\text{qdle}},
\end{equation*}
where the relations are defined as follows:
let $c$ be a crossing of $D$,
and let $a_{i}, a_{j}$, and $a_{k}$ be arcs as illustrated in \cref{Fig:relation}.
Then, the relation $R(c)$ is defined by $a_{i} \qop a_{j} = a_{k}$.
\begin{figure}
  \centering
  \includegraphics{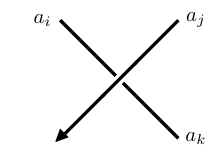}
  \caption{Arcs around the crossing $c$}
  \label{Fig:relation}
\end{figure}
See~\cite{Kamada-2017-SurfaceKnots4Spacea} for details on quandle presentations.
Since $Q(D)$ is independent of the choice of diagram of $L$ up to isomorphism, we simply write $Q(L)$ for $L$.

\begin{prop}[cf.~\cite{Kamada-2017-SurfaceKnots4Spacea}]
  Let $L$ be an oriented link or a $(1,1)$-tangle.
  For a quandle $X$, any $X$-coloring $C$ of a diagram $D$ of $L$ uniquely extends to a quandle homomorphism $C: Q(L) \to X$.
  In addition, any quandle homomorphism $Q(L) \to X$ is obtained by extending an $X$-coloring of $D$.
  \label{Prop:Qdle_coloring}
\end{prop}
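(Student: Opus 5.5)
The plan is to use the universal property of the fundamental quandle. Because $Q(D)$ is defined by a presentation, generated by $\mathrm{Arc}(D)$ with one relation $R(c)$ for each crossing $c$, a quandle homomorphism out of $Q(D)$ amounts to a choice of images of the generators that satisfies the relations. Concretely, I would fix a diagram $D$ of $L$ (or of the $(1,1)$-tangle) and an $X$-coloring $C\colon \mathrm{Arc}(D)\to X$. Let $F(\mathrm{Arc}(D))$ be the free quandle on the arcs. The map $C$ extends uniquely to a quandle homomorphism $\hat C\colon F(\mathrm{Arc}(D))\to X$. The coloring condition at each crossing $c$ says exactly that $\hat C(a_i)\qop\hat C(a_j)=\hat C(a_k)$, so $\hat C$ identifies the two sides of each relation $R(c)$.

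The next step is to check that the congruence on $F(\mathrm{Arc}(D))$ generated by the relations $R(c_1),\dots,R(c_m)$ lies in the kernel congruence of $\hat C$. This holds because the kernel congruence of a quandle homomorphism is compatible with both $\qop$ and $\overline{\qop}$, and it contains every relation. Hence $\hat C$ factors through $Q(D)$. Uniqueness of the extension follows because $Q(D)$ is generated, as a quandle, by the images of the arcs. A homomorphism is determined by its values on generators once we close under $\qop$ and $\overline{\qop}$.

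For the converse, I would take a quandle homomorphism $f\colon Q(D)\to X$ and restrict it to the arcs, setting $C(a)=f([a])$. Since $[a_i]\qop[a_j]=[a_k]$ holds in $Q(D)$ and $f$ preserves $\qop$, the map $C$ satisfies the coloring condition at every crossing. Its extension is $f$ by uniqueness. Finally, the identification $Q(D)\cong Q(L)$ is independent of the diagram, via the Reidemeister-move isomorphisms already granted in the text, so the statement transfers from $D$ to $L$.

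The main obstacle is the well-definedness of the factorization through the presentation: one must be precise about what a quandle presentation is, namely the free quandle modulo the smallest congruence containing the relations. This is the standard setup in \cite{Kamada-2017-SurfaceKnots4Spacea}. Once that definition is fixed, the rest is a routine verification.
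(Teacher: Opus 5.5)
Your argument is correct and is the standard one. The paper gives no proof of its own and defers to \cite{Kamada-2017-SurfaceKnots4Spacea}, where the statement is exactly the universal property of a quandle presentation (the free quandle on the arcs modulo the smallest congruence containing the crossing relations), proved as you do it: uniqueness because the arcs generate $Q(D)$, the converse by restricting a homomorphism to the arcs, and the transfer to $Q(L)$ via the diagram-independence isomorphism.
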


In what follows,
we identify an $X$-coloring with the corresponding quandle homomorphism $Q(L) \to X$.

\begin{rem}
Note that a quandle $X$ is $(L,p)$-admissible if and only if any $X$-coloring of the $(1,1)$-tangle $\tilde{L}$ obtained from $(L,p)$ factors through the fundamental quandle $Q(L)$.
That is, for any quandle homomorphism $\tilde{C}: Q(\tilde{L}) \to X$, there exists a quandle homomorphism $C: Q(L) \to X$ such that the following diagram commutes:
\[
\xymatrix{
  Q(\tilde{L}) \ar[d] \ar[rd]^{\tilde{C}} & \\
  Q(L) \ar[r]^-C & X
}\]
\end{rem}

\begin{proof}[Proof of \cref{Prop:K-admissible_condition}]
  Suppose that a quandle $X$ is admissible.
  Let $\tilde{L}$ be the $(1,1)$-tangle obtained from an oriented link $L$ with a base point $p \in L$, and let $D$ be its diagram.
  We now show that the quandle $X$ is $(L, p)$-admissible, that is, any $X$-coloring $C$ of $D$ satisfies $C(a_s) = C(a_t)$.

  By the functoriality of assigning the associated group \cite{Joyce-1982-ClassifyingInvariantKnotsKnotb}, 
  we have the following commutative diagram:
  \[
  \xymatrix{
    Q(\tilde{L}) \ar[d]^{\eta_{Q(\tilde{L})}} \ar[r]^{C} & X \ar[d]^{\eta_{X}}\\
    \mathrm{As}(Q(\tilde{L})) \ar[r]^{\mathrm{As}(C)} & \mathrm{As}(X)
}\]
Furthermore, according to \cite{Joyce-1982-ClassifyingInvariantKnotsKnotb},
the group $\mathrm{As}(Q(\tilde{L}))$ is isomorphic to the link group $G(L)$ and we have $\eta_{Q(\tilde{L})}(a_{s}) = \eta_{Q(\tilde{L})}(a_{t})$. 
Hence,
\begin{align*}
  \eta_{X} \circ C (a_{s})
  = \mathrm{As}(C) \circ \eta_{Q(\tilde{L})} (a_{s}) = \mathrm{As} (C) \circ \eta_{Q(\tilde{L})} (a_{t}) 
  = \eta_{X} \circ C (a_{t}).
\end{align*}
Since the quandle $X$ is admissible,
the natural map $\eta_{X}$ is injective.
Thus, 
we conclude that $C(a_{s}) = C(a_{t})$.
Therefore, the quandle $X$ is $(L, p)$-admissible. 
\end{proof}

\section*{Acknowledgment}
  The authors would like to thank Seiichi~Kamada and Hiroshi~Tamaru for discussions on this research.
  The authors also would like to thank Keisuke~Himeno, Yuko~Ozawa, and Yuta~Taniguchi for constant encouragements.
  The first author was supported by JST SPRING, Grant number JPMJSP2138.
  The second author was supported by JST SPRING, Grant Number JPMJSP2139.

\bibliography{ConjCond}
\bibliographystyle{abbrv}
\end{document}